 \numberwithin{equation}{section} 
\theoremstyle{plain}
\newtheorem{theorem}{Theorem}[section]
\newtheorem{lemma}{Lemma}[section]
\newtheorem{definition}{Definition}[section]
\newtheorem{example}{Example}[section]
\newtheorem{corollary}{Corollary}[section]
\newtheorem{remark}{Remark}[section]
\newcommand{\R}{\mathbb R}
\begin{document}

\title{Stein's density approach  and information inequalities}


\author{{Christophe} {Ley} and {Yvik} {Swan}}




\maketitle 
\begin{abstract} We provide a new perspective on Stein's so-called
  density approach by introducing a new operator and characterizing
  class which are valid for a much wider family of probability
  distributions on the real line.  We prove an elementary
  factorization property of this operator and propose a new Stein
  identity which we use to derive information inequalities in terms of
  what we call the \emph{generalized Fisher information distance}. We
  provide explicit bounds on the constants appearing in these
  inequalities for several important cases.   We conclude with a comparison
  between our results and known results in the Gaussian case, hereby
  improving on several known inequalities from the literature.
\end{abstract}


\section{Introduction}
Charles Stein's crafty exploitation of the characterization 
\begin{equation}
  \label{eq:2}
  X \sim \mathcal{N}(0, 1) \Longleftrightarrow \mathrm{E} \left[
    f'(X)-Xf(X) \right]=0 \, \mbox{ for all bounded } f \in C^1(\R)
\end{equation}
has given birth to a ``method'' which is now an acclaimed tool both in
applied and in theoretical probability. The secret of the ``method''
lies in the structure of the operator $\mathcal{T}_{\phi}f(x) :=
f'(x) - xf(x)$ and in the flexibility in the choice of test functions
$f$.  For the origins  we refer the reader to
\cite{S81,St86,S72}; for an overview of the more recent achievements
in this field we refer to the monographs
\cite{NP11,BC05,ChGoSh11} or the review articles
\cite{NP10,Ro11}.


Among the many ramifications and extensions that the method has known,
so far the connection with information theory has gone relatively
unexplored.  Indeed while it has
long been known that Stein identities such as  \eqref{eq:2} are
related to information theoretic tools and concepts (see, e.g.,
\cite{Jo04,  KoHaJo05, CoTh06}), to the best of our knowledge the only
references to explore this connection upfront are \cite{BaJoKoMa10} in
the context of compound Poisson approximation,  and more recently 
\cite{Sa12,sason2012entropy} for Poisson and Bernoulli
approximation.  In this paper and the
companion paper \cite{LS11c} we extend Stein's characterization of the
Gaussian \eqref{eq:2} to a broad class of univariate distributions
and, in doing so, provide an adequate framework in which the
connection with information distances becomes transparent. 

 
The structure of the present paper is as follows. In Section
\ref{sub:st_inf} we provide the new perspective on the density
approach from \cite{StDiHoRe04} which allows to extend this
construction to virtually any absolutely continuous probability
distribution on the real line. In Section \ref{sec:conn-with-inform}
we exploit the structure of our new operator to derive a family of
Stein identities through which the connection with information distances
becomes evident. In Section \ref{newsection} we compute bounds on the constants
appearing in our inequalities; our method of proof is, to the best of
our knowledge, original. Finally in Section
\ref{sec:applications} we discuss specific examples. 
 
\section{The density approach} \label{sub:st_inf}

Let $\mathcal G$ be the collection of positive real functions $x \mapsto p(x)$ such
that (i) their support $S_p := \left\{ x \in \R \, : \; p(x) \mbox{
    (exists and) is positive}\right\}$ is an interval with
closure $\bar S_p= [a, b]$, for some $-\infty\le a < b \le \infty$, 
(ii) they are differentiable (in the usual sense) at every point in
$(a, b)$ with derivative $x \mapsto p'(x) := \frac{d}{dy}p(y) |_{y=x}$ and (iii)
 $\int_{S_p}p(y)dy =1$. Obviously, each $p\in \mathcal{G}$ is the
 density (with respect to
 the Lebesgue measure) of an absolutely continuous random
 variable. Throughout  we adopt the  convention 
 \begin{equation*}
   \label{eq:1}
   \frac{1}{p(x)} =  \left\{
     \begin{array}{cl}
       \frac{1}{p(x)} & \mbox{if } x \in S_p\\
0 & \mbox{otherwise};
     \end{array}
\right.
 \end{equation*}
this implies, in
particular, that    $p(x) /
p(x) = \mathbb{I}_{S_p}(x)$, the indicator function of the support
$S_p$. As final notation, for  $p \in \mathcal{G}$ we  write
${\rm E}_p[l(X)] := \int_{S_p} l(x) p(x) dx.$ 


With this setup in hand we are ready to provide the two main definitions of
this paper (namely, a class of functions and an operator) and to state
and prove our first main result (namely, a characterization).

\begin{definition} \label{def:class} To $p \in \mathcal G$ we
  associate (i) the collection $\mathcal{F}(p)$ of functions $f: \R\to
  \R$ such that the mapping $x \mapsto f(x)p(x)$ is  differentiable on
  the interior of $S_p$ and $f(a^{+})p(a^{+}) = f(b^{-})p(b^{-}) =0$, 
  and (ii) the operator
  $\mathcal{T}_p : \mathcal{F}(p) \to \R^\star : f \mapsto
  \mathcal{T}_pf$ defined through
\begin{equation}\label{eq:st_op}  \mathcal{T}_pf:
  \R\rightarrow\R:x  \mapsto \mathcal{T}_pf(x) :=
  \frac{1}{p(x)} \left. \frac{d}{dy}(f(y)p(y))\right|_{y=x}.\end{equation} 
We call $\mathcal{F}(p)$ the class of \emph{test functions} associated
with $p$, and $\mathcal{T}_p$ the \emph{Stein operator} associated with
$p$. 
\end{definition}

\begin{theorem}\label{theo1} Let $p, q \in \mathcal G$ and let $Q(b) =
  \int_a^bq(u)du$.  Then $\int_{-\infty}^{+\infty} \mathcal{T}_pf(y)
  q(y) dy=0$ for all $f \in \mathcal{F}(p)$ if, and only if,
  ${q(x)}=p(x){Q(b)}$ for all $x \in S_p$. 
\end{theorem}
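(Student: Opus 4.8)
The plan is to strip the statement down to a property of the single ratio $g:=q/p$ and then to recognise the nontrivial implication as an instance of the fundamental lemma of the calculus of variations, with test functions tailored to the class $\mathcal F(p)$. First I would invoke the support convention: since $\mathcal T_pf$ vanishes off $S_p$,
\begin{equation*}
\int_{-\infty}^{+\infty}\mathcal T_pf(y)\,q(y)\,dy=\int_a^b \frac{q(y)}{p(y)}\,(fp)'(y)\,dy=\int_a^b g(y)\,h'(y)\,dy ,
\end{equation*}
where $g=q/p$ on $S_p$ and $h=fp$. The gain from this substitution is that, as $f$ ranges over $\mathcal F(p)$, the function $h=fp$ ranges over all differentiable functions on $(a,b)$ satisfying $h(a^+)=h(b^-)=0$; conversely any such $h$ yields an admissible $f=h/p$. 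So the hypothesis is exactly: $\int_a^b g\,h'=0$ for every such $h$.

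The sufficiency direction is then immediate. If $q=pQ(b)$ on $S_p$, then $g\equiv Q(b)$ is constant, so that $\int_a^b g\,h'=Q(b)\int_a^b(fp)'=Q(b)\big((fp)(b^-)-(fp)(a^+)\big)=0$, the boundary terms vanishing by the very conditions built into the definition of $\mathcal F(p)$.

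For the necessity direction I would manufacture a rich family of admissible $h$ out of auxiliary densities. Given any continuous probability density $w$ compactly supported inside $(a,b)$, set $h_w(x):=\int_a^x\big(p(u)-w(u)\big)\,du$. Then $h_w$ is $C^1$ on $(a,b)$ and, because $p$ and $w$ both integrate to one, $h_w(a^+)=0$ and $h_w(b^-)=\int_a^b p-\int_a^b w=0$; hence $f_w:=h_w/p\in\mathcal F(p)$. Inserting $h_w$ into the hypothesis and using $\int_a^b g\,p=\int_a^b q=Q(b)$ gives
\begin{equation*}
0=\int_a^b g\,(p-w)=Q(b)-\int_a^b g\,w ,\qquad\text{that is}\qquad \int_a^b\big(g-Q(b)\big)\,w=0
\end{equation*}
for every such $w$. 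Letting $w$ run through an approximate identity concentrating at an arbitrary $x_0\in(a,b)$ and using the continuity of $g=q/p$, I recover $g(x_0)=Q(b)$, i.e. $q(x_0)=p(x_0)Q(b)$ for all $x_0\in S_p$.

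The main obstacle lies entirely in this second direction, and it is one of admissibility rather than of hard analysis: one must be certain that the test functions constructed genuinely belong to $\mathcal F(p)$ and that no boundary or integrability pathologies intervene when $a=-\infty$ or $b=+\infty$. The device of choosing $h_w=\int_a^\cdot(p-w)$ is designed precisely to sidestep these difficulties — it confines all the action to a compact subset of the open interval where $p$ is continuous and strictly positive, it enforces the two vanishing-at-the-boundary conditions automatically through the normalisation of $p$ and $w$, and it pins down the correct constant $Q(b)$ directly rather than through a separate normalisation argument. The only regularity genuinely needed is the continuity of $q/p$ on $S_p$, which is guaranteed by the differentiability of $p$ and $q$ together with $p>0$ there.
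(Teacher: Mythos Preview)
Your proof is correct and takes a genuinely different route from the paper's in the necessity direction. The paper, for each fixed $z\in S_p$, sets $l_z=\mathbb I_{(a,z]}-P(z)$ and $f_z^p(x)=p(x)^{-1}\int_a^x l_z(u)p(u)\,du$, which solves the Stein equation $\mathcal T_pf=l_z$; inserting this into the hypothesis yields $Q(z)-P(z)Q(b)=0$ for every $z$, i.e.\ equality of the (renormalised) cumulative distribution functions, from which the density statement follows by differentiation. You instead manufacture test functions $h_w=\int_a^{\cdot}(p-w)$ from compactly supported continuous densities $w$, reduce to $\int_a^b(g-Q(b))\,w=0$ for all such $w$, and conclude via an approximate-identity (du Bois-Reymond) argument. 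The paper's choice dovetails with the Stein-equation machinery exploited later in the article---the functions $f_z^p$ are precisely the solutions to \eqref{eq:steineq22} for indicator data---whereas your approach is the classical fundamental lemma of the calculus of variations and makes transparent that the content is ``$g$ orthogonal to every admissible $h'$ forces $g$ constant''. One small caveat: your closing claim that $q/p$ is continuous on all of $S_p$ tacitly assumes $(a,b)\subset S_q$, which is not given a priori; however, your own identity $\int_a^b(g-Q(b))\,w=0$ already rules out any open subinterval of $(a,b)$ on which $q$ vanishes (when $Q(b)>0$), so the point is easily patched---and the paper's proof incurs the same harmless step when it differentiates $Q=P\,Q(b)$.
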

 
\begin{proof} If $Q(b)=0$ the statement holds trivially.  We now take
  $Q(b)>0$. To see the sufficiency, note that the
  hypotheses on $f$, $p$ and $q$ guarantee that 
  \begin{align*}
    \int_{-\infty}^{\infty} \mathcal{T}_pf(y) q(y) dy & =   Q(b) \int_a^b 
    \frac{d}{du}(f(u)p(u)) |_{u=y}dy  \\ 
& 
= Q(b) \left(  f(b^{-})p(b^{-}) - f(a^{+})p(a^{+})  \right) = 0.
  \end{align*}
To see the necessity, first note that the condition   $\int_{\R}
\mathcal{T}_pf(y) q(y) dy=0$ implies that the function $y \mapsto
\mathcal{T}_pf(y) q(y)$ be Lebesgue-integrable
. Next define for $z \in
\R$ the
function  
\begin{equation*}
  l_z(u):= ({\mathbb{I}}_{(a, z]}(u) -
P(z))\mathbb{I}_{S_p}(u)
\end{equation*}
 with $P(z):=\int_{a}^z p(u)du$, which
satisfies
\begin{equation*}
  \int_a^bl_z(u)p(u)du = 0.
\end{equation*}
Then the function 
$$
    f_z^p(x) : = \frac{1}{p(x)} \int_{a}^x l_z(u) p(u) du \left( =
      -\frac{1}{p(x)} \int_{x}^b l_z(u) p(u) du \right) 
$$
  belongs to $\mathcal{F}(p)$ for all $z$ and satisfies the
equation
$$\mathcal{T}_pf_z^p(x) =
  l_z(x)$$
for all $x \in S_p$. 
For this choice of test function we then obtain 
\begin{align*}
\int_{-\infty}^{+\infty}\mathcal{T}_pf_z^p(y) q(y)dy & =    \int_{-\infty}^{+\infty}l_{z}(y)
  q(y)dy= (Q(z) - P(z)Q(b)) \mathbb I_{S_p}(z),
\end{align*}
with $Q(z):=\int_{a}^z q(u)du$. Since this integral equals zero by hypothesis, it follows that $  {Q(z)} = P(z){Q(b)}$ for all $z \in S_p$, hence the claim holds. 
 \end{proof}
The above is, in a sense, nothing more than a peculiar
statement of what is often referred to as a   ``Stein
characterization''.  Within the more 
conventional framework of real random variables having absolutely 
continuous densities, Theorem \ref{theo1} reads as follows. 
\begin{corollary}[The density approach]\label{cor:dens-approach} Let
  $X$ be an absolutely continuous random variable with density $p \in
  \mathcal{G}$. Let $Y$ be another absolutely continuous random
  variable. Then 
${\rm E} \left[ \mathcal{T}_pf(Y) \right]=0$ for all $f \in \mathcal{F}(p)$
if, and only if,   either ${\rm P}(Y\in S_p)=0$ or ${\rm P}(Y\in S_p)>0$ and
\begin{equation*}
  {\rm P}\left(  \left. Y \le z \, \right| Y \in S_p \right) = {\rm P}(X \le z)
\end{equation*}
for all $ z \in S_p $.
\end{corollary}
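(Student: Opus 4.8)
The plan is to deduce Corollary \ref{cor:dens-approach} directly from Theorem \ref{theo1} by rewriting the expectation $\mathrm{E}[\mathcal{T}_pf(Y)]$ as an integral and carefully disentangling the two regimes $\mathrm{P}(Y \in S_p)=0$ and $\mathrm{P}(Y \in S_p)>0$. First I would let $q$ denote the density of the absolutely continuous random variable $Y$, so that $\mathrm{E}[\mathcal{T}_pf(Y)] = \int_{-\infty}^{+\infty} \mathcal{T}_pf(y) q(y)\, dy$. The crucial observation is that, by the convention $1/p(x)=0$ off $S_p$ built into the definition of $\mathcal{T}_p$, the operator $\mathcal{T}_pf$ vanishes identically outside $S_p$; hence only the mass of $q$ on $S_p$ ever enters the integral, and this is exactly why the condition splits according to whether $\mathrm{P}(Y\in S_p)$ is zero or positive.

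In the regime $\mathrm{P}(Y \in S_p)=0$, the integrand $\mathcal{T}_pf(y)q(y)$ is zero almost everywhere, so the expectation vanishes for every $f$, which matches the ``either'' branch of the stated equivalence. I would then treat the complementary regime $\mathrm{P}(Y\in S_p) = Q(b) > 0$ (using the notation $Q(b)=\int_a^b q(u)\,du$ from Theorem \ref{theo1}, which here equals the total mass of $q$ on $S_p$). In this case Theorem \ref{theo1} applies verbatim: the identity $\mathrm{E}[\mathcal{T}_pf(Y)]=0$ for all $f \in \mathcal{F}(p)$ holds if and only if $q(x) = p(x)Q(b)$ for all $x \in S_p$. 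The remaining task is purely a matter of translating this pointwise density identity into the conditional distributional statement. Dividing by $Q(b)>0$ shows that $q(x)/Q(b) = p(x)$ on $S_p$, and I would recognize $q(x)/Q(b)$ as precisely the conditional density of $Y$ given $\{Y \in S_p\}$; integrating both sides up to $z \in S_p$ yields $Q(z)/Q(b) = P(z)$, that is, $\mathrm{P}(Y \le z \mid Y \in S_p) = \mathrm{P}(X \le z)$ for all $z \in S_p$.

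The main obstacle, such as it is, lies less in the probabilistic content than in handling the edge effects cleanly: I must confirm that restricting the pointwise equality ``for all $x \in S_p$'' to the support $S_p$ (rather than all of $\R$) is exactly what is needed, since $q$ may well place mass outside $S_p$ without affecting $\mathrm{E}[\mathcal{T}_pf(Y)]$, and that the conditional cumulative distribution function is only being compared on $z \in S_p$. I would therefore be careful to note that $\mathrm{P}(X \le z)=P(z)$ because $X$ has density $p$, and that the equivalence claimed is genuinely an ``if and only if'' in both regimes, the forward direction following from Theorem \ref{theo1}'s sufficiency and the converse from its necessity. Assembling these two regimes gives the full statement, so the corollary is essentially a faithful probabilistic repackaging of Theorem \ref{theo1} with the support convention doing all the work of accounting for mass outside $S_p$.
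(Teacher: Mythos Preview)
Your proposal is correct and is precisely the argument the paper intends: the corollary is stated in the paper without a separate proof, as a direct probabilistic restatement of Theorem~\ref{theo1}, and your write-up simply spells out the translation (vanishing of $\mathcal{T}_pf$ off $S_p$, the two regimes $Q(b)=0$ and $Q(b)>0$, and the passage from the density identity $q=p\,Q(b)$ on $S_p$ to the conditional CDF statement). The only cosmetic point worth tightening is that Theorem~\ref{theo1} is stated for $q\in\mathcal{G}$ whereas here $Y$ is merely absolutely continuous, but inspection of that proof shows the hypothesis $q\in\mathcal{G}$ is never used, so ``verbatim'' is justified.
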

 Corollary \ref{cor:dens-approach} extends the density
approach from \cite{StDiHoRe04} or  \cite{CS11,
  ChGoSh11} to a much wider class of distributions; it also contains 
 the Stein characterizations for the Pearson given in \cite{S01} and
 the   more recent general 
characterizations  studied  in \cite{Do12, GoRe12}.  There is,
however, a significant  shift  operated between our ``derivative of a
product'' operator \eqref{eq:st_op} and the standard way of writing
these operators in the literature.  Indeed, while 
  one can
always distribute the derivative in 
\eqref{eq:st_op}  to obtain (at least formally)  the expansion
\begin{equation}
  \label{eq:3}
\mathcal{T}_pf(x) = \left( f'(x) + \frac{p'(x)}{p(x)}f(x) \right)
\mathbb I_{S_p}(x),
\end{equation}
the latter requires $f$ 
be differentiable on $S_p$ in order to make sense. We do not require
this, neither do we
require that each summand in \eqref{eq:3} be well-defined on
$S_p$ nor do we need to impose integrability conditions on $f$ for
Theorem~\ref{theo1} (and thus Corollary \ref{cor:dens-approach}) to hold!
Rather, our definition of $\mathcal{F}(p)$ allows to
 identify a collection of minimal conditions on the class of
 test functions $f$ for the resulting operator $\mathcal{T}_p$ to
 be orthogonal to $p$ w.r.t. the Lebesgue measure, and thus
 characterize $p$.

\begin{example}
  Take $p=\phi$,  the standard Gaussian. Then $\mathcal{F}(\phi)$ is
composed of all real-valued  functions $f$ such that (i) $x \mapsto f(x)
e^{-x^2/2}$ is differentiable on $\R$ and  (ii) $\lim_{x\to \pm
  \infty}f(x)  e^{-x^2/2}= 0$. In particular  $\mathcal{F}(\phi)$
  contains the collection  of all differentiable bounded functions and  
$$\mathcal T_{\phi}f (x) = f'(x) - xf(x),$$ 
which is  Stein's well-known operator for characterizing the
Gaussian (see, e.g., \cite{S72, BC05,ChGoSh11}). There are of course
many other subclasses that can be of interest. For example the class
$\mathcal{F}(\phi)$  also contains the collection 
of functions $f(x)= -f_0'(x)$ with $f_0$ a twice  differentiable bounded
function; for these we get 
$$\mathcal T_{\phi}f (x) = xf_0'(x)-f_0''(x),$$ 
the generator  of an Ornstein-Uhlenbeck process, 
see \cite{Ba90,G91, NP11}. The class 
 $\mathcal{F}(\phi)$ as well contains the
collection of functions of the form $f(x)=H_n(x) f_0(x) $ for $H_n$
the $n$-th Hermite polynomial and  $f_0$ any differentiable and
bounded function. For these $f$ we get 
\begin{equation*}
  \mathcal T_{\phi}f (x) = H_n(x) f_0'(x) -H_{n+1}(x) f_0(x), 
\end{equation*}
an operator already discussed in \cite{GR05} (equation (38)).  
\end{example}

\begin{example}
  Take $p = Exp $ 
 the standard
rate-one exponential
distribution. Then $\mathcal{F}(Exp)$ is composed of all real-valued
  functions $f$ such that (i) $x \mapsto f(x) e^{-x}$ is
differentiable on $(0, +\infty)$, (ii)   $f(0) = 0$ and (iii) $\lim_{x \to
  +\infty} f(x)e^{-x} = 0$. In particular $\mathcal{F}(Exp)$ contains the
 collection  of all differentiable bounded functions such that
 $f(0)=0$ and 
 \begin{equation*}
   \mathcal T_{Exp}f(x) = \left(f'(x) -  f(x)\right)\mathbb{I}_{[0,\infty)}(x),
 \end{equation*}
 the operator usually associated to the exponential, see \cite{Lu94,
   Pi04, StDiHoRe04}. The class
 $\mathcal{F}(Exp)$ also contains the collection of functions of the
 form $f(x) = x f_0(x)$ for $f_0$ any differentiable bounded function. For
 these $f$ we get 
 \begin{equation*}
   \mathcal T_{Exp}f(x) = \left(xf_0'(x) + (1-x)  f_0(x)\right)\mathbb{I}_{[0,\infty)}(x),
 \end{equation*}
an operator put to use in \cite{ChFuRo11}.
\end{example}

\begin{example}
  Finally take $p = Beta(\alpha, \beta)$
the beta distribution with parameters
$(\alpha, \beta) \in \R_0^{+}\times \R_0^{+}$. Then
$\mathcal{F}(Beta(\alpha, \beta))$ is composed of all real-valued
functions $f$ such that (i)  $x \mapsto f(x) x^{\alpha-1}(1-x)^{\beta-1} $ is
differentiable on $(0, 1)$, (ii) $ \lim_{x \to 0}  f(x)
x^{\alpha-1}(1-x)^{\beta-1} = 0$ and (iii) $ \lim_{x \to 1}  f(x)
x^{\alpha-1}(1-x)^{\beta-1} = 0$. In particular
$\mathcal{F}(Beta(\alpha, \beta))$ contains the
 collection  of functions of the form
$   f(x) = (x(1-x))f_0(x)$
with $f_0$ any differentiable bounded function. For
these $f$ we get 
 \begin{equation*}
   \mathcal T_{Beta(\alpha,\beta)}f(x) =  \left( \left( \alpha(1-x) -
       \beta x \right) f_0(x) + x(1-x)f_0'(x) \right) \mathbb{I}_{[0,1]}(x),
 \end{equation*}
an operator recently put to use in, e.g., \cite{GoRe12,Do12}. 
\end{example}

There are obviously many more distributions that can be tackled as in
the previous examples (including the Pearson case from 
\cite{S01}), which we leave  to the interested reader. 



\section{Stein-type identities and the generalized Fisher information distance}
\label{sec:conn-with-inform}

 It has long been known that, in certain favorable circumstances, the
properties of the Fisher information or of the Shannon entropy can be
used quite effectively to prove  information
theoretic   central limit theorems; the 
early references in this vein are \cite{Sh75, Br82, 
  BA86,LI59}. 
Convergence in information CLTs  is
generally  studied  in terms of information (pseudo-)distances 
such as the \emph{Kullback-Leibler divergence}  between two
densities  $p$ and $q$, defined as
 \begin{equation}\label{eq:26} 
   d_{\rm KL}(p|| q) = {\rm E}_q \left[ \log \left( \frac{q(X)}{p(X)} \right) \right],
 \end{equation}
or the \emph{Fisher information distance}
\begin{equation}\label{eq:11}
  \mathcal{J}(\phi,q)={\rm E}_q\left[\left(X+\frac{q'(X)}{q(X)}\right)^2\right]
\end{equation}
 which measures deviation
between any density $q$ and the standard
Gaussian $\phi$. Though they allow for extremely elegant proofs,
convergence in the sense of \eqref{eq:26} or \eqref{eq:11} results in
very  strong statements. Indeed both \eqref{eq:26} and \eqref{eq:11}  are known  to dominate
more ``traditional'' probability metrics. More precisely we have,
on the one hand,  \emph{Pinsker's inequality} 
\begin{equation}\label{eq:13}
  d_{\rm TV} (p, q) \le \frac{1}{\sqrt 2} \sqrt{    d_{\rm KL}(p|| q)},
\end{equation}
for $d_{\rm TV}(p,q)$ the \emph{total variation distance} between the laws $p$ and
$q$ (see, e.g., \cite[p. 429]{GS02}), and, on the other hand,  
\begin{equation}\label{eq:25}
  d_{L^1}(\phi, q) \le \sqrt 2 \sqrt{\mathcal{J}(\phi, q)}
\end{equation}
for $d_{L^1}(\phi, q)$ the \emph{$L^1$ distance} between the laws $\phi$
and $q$ (see \cite[Lemma 1.6]{MR2128239}). These
information inequalities  show that convergence in the sense of
 \eqref{eq:26} or \eqref{eq:11} implies  convergence in total 
 variation or  in $L^1$, for example. Note that one can
 further use De Brujn's identity on \eqref{eq:13} to deduce that
 convergence in Fisher information is itself stronger than convergence in
 relative entropy.

 While Pinsker's inequality \eqref{eq:13} is valid irrespective of the
 choice of $p$ and $q$ (and enjoys an extension to discrete random
 variables), both \eqref{eq:11} and \eqref{eq:25} are reserved 
 for Gaussian convergence.  Now there exist extensions of the distance 
 \eqref{eq:11} to non-Gaussian distributions (see \cite{BaJoKoMa10}
 for the discrete case) which, as could be expected, have also been
 shown to dominate the more traditional probability metrics. There is,
 however,  no general counterpart of Pinsker's inequality for
 the Fisher information distance \eqref{eq:11}; at least there exists,
 to the best of our knowledge, no inequality in the literature which
extends \eqref{eq:25} to a general couple of densities $p$ and $q$. 

In this section we use the density approach outlined in Section
\ref{sub:st_inf} to construct Stein-type identities which
provide the required   extension of \eqref{eq:25}.  More precisely, we will show that
a wide family of probability metrics (including the \emph{Kolmogorov}, the
\emph{Wasserstein} and the $L^1$ distances) is dominated by the quantity 
\begin{equation}
  \label{eq:19}
  \mathcal{J}(p,q):={\rm E}_q\left[\left(\frac{p'(X)}{p(X)}-\frac{q'(X)}{q(X)}\right)^2\right].
\end{equation}
Our bounds,
moreover,  contain an explicit constant which will be shown in Section
\ref{newsection} to be at worst as good as  the best   bounds in all
known instances. In the
spirit of \cite{BaJoKoMa10} we call \eqref{eq:19} the
\emph{generalized Fisher information distance} between the densities
$p$ and $q$, although here we slightly abuse of language since
\eqref{eq:19} rather defines a {pseudo-distance} than a \emph{bona
  fide} metric between probability density functions.

We start with an elementary statement which relates, for $p
\neq q$, the Stein operators $\mathcal{T}_p$ and $\mathcal{T}_q$
through the difference of their respective \emph{score functions} $\frac{p'}{p}$ and $\frac{q'}{q}$. 
 
\begin{lemma}  \label{lemma:facto} Let $p$ and $q$ be  probability
  density functions in $\mathcal G$ with respective supports $S_p$ and $S_q$. Let $S_q\subseteq S_p$ and define 
$$
  r(p, q)(x) :=    \left(\frac{p'(x)}{p(x)} - \frac{q'(x)}{q(x)}\right)\mathbb{I}_{S_p}(x).
$$
Suppose that
  $\mathcal{F}(p) \cap \mathcal{F}(q) \neq \emptyset$.  Then, for all $f \in \mathcal{F}(p)\cap \mathcal{F}(q)$, we have
$$
    \mathcal{T}_pf(x) = \mathcal{T}_qf(x)+ f (x) r(p, q)(x)+    \mathcal{T}_pf(x)\mathbb{I}_{S_p\setminus S_q}(x),
$$
and therefore 
\begin{equation}
  \label{eq:9}
  {\rm E}_q \left[ \mathcal{T}_pf(X) \right] = {\rm E}_q \left[ f(X) r(p, q)(X)
  \right].
\end{equation}
\end{lemma}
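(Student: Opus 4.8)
The plan is to unwind the definitions of $\mathcal{T}_p$ and $\mathcal{T}_q$ via the product rule. For $f \in \mathcal{F}(p) \cap \mathcal{F}(q)$, on the interior of $S_q$ (which sits inside $S_p$) I can distribute the derivative in \eqref{eq:st_op} to obtain the expansion \eqref{eq:3}, giving $\mathcal{T}_pf(x) = f'(x) + (p'(x)/p(x))f(x)$ and similarly $\mathcal{T}_qf(x) = f'(x) + (q'(x)/q(x))f(x)$ on $S_q$. Subtracting cancels the common $f'(x)$ term and produces exactly $f(x)(p'(x)/p(x) - q'(x)/q(x)) = f(x)r(p,q)(x)$ wherever both densities are positive. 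First I would therefore establish the pointwise identity
\begin{equation*}
  \mathcal{T}_pf(x) = \mathcal{T}_qf(x) + f(x)r(p,q)(x)
\end{equation*}
on the set $S_q$, which is the heart of the computation.

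The subtlety lies in bookkeeping over the three regions of $S_p$: the interior of $S_q$, where both operators are defined and the cancellation above applies; the set $S_p \setminus S_q$, where $q$ vanishes so that $\mathcal{T}_qf$ and the term $f(x)r(p,q)(x)$ (through the convention $1/q \equiv 0$ on the complement of $S_q$) contribute nothing, leaving only $\mathcal{T}_pf(x)$; and the complement of $S_p$, where all three quantities vanish by the convention $1/p \equiv 0$. The stated identity is precisely the decomposition that accounts for all three cases simultaneously: on $S_q$ the correction term $\mathcal{T}_pf(x)\mathbb{I}_{S_p\setminus S_q}(x)$ is zero and we recover the clean relation, whereas on $S_p \setminus S_q$ we have $\mathcal{T}_qf(x) = 0$ and $r(p,q)(x) = p'(x)/p(x)$, so the right-hand side collapses to $f(x)p'(x)/p(x)$ plus $\mathcal{T}_pf(x)$; I would need to check that this matches $\mathcal{T}_pf(x)$, which holds because on $S_p\setminus S_q$ we also have $\mathcal{T}_pf(x) = f'(x) + (p'(x)/p(x))f(x)$ and the extra $\mathcal{T}_pf$ term is a genuine correction rather than a double-count. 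I expect the main obstacle to be stating this region-by-region argument cleanly while respecting the $1/p \equiv 0$ convention, since naive manipulation of $p'/p$ outside $S_p$ is where errors creep in.

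Once the operator identity is in hand, the integrated version \eqref{eq:9} follows by taking expectations under $q$. Since ${\rm E}_q[\,\cdot\,]$ integrates against $q$ over $S_q$ only, and $q \equiv 0$ on $S_p \setminus S_q$, the correction term $\mathcal{T}_pf(x)\mathbb{I}_{S_p\setminus S_q}(x)$ integrates to zero, and moreover ${\rm E}_q[\mathcal{T}_qf(X)] = 0$ by Theorem \ref{theo1} applied with $q$ in the role of $p$ (taking the reference density equal to $q$ itself, so that $Q(b) = 1$). What remains is exactly
\begin{equation*}
  {\rm E}_q[\mathcal{T}_pf(X)] = {\rm E}_q[f(X)r(p,q)(X)],
\end{equation*}
as claimed. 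Here I must confirm that $f \in \mathcal{F}(q)$ guarantees the integrability needed to invoke Theorem \ref{theo1} and to split the expectation, which is exactly what the boundary conditions $f(a^{+})q(a^{+}) = f(b^{-})q(b^{-}) = 0$ in the definition of $\mathcal{F}(q)$ provide.
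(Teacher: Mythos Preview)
Your approach via the expansion \eqref{eq:3} is exactly the ``easier proof'' the authors acknowledge in the Remark following their argument but deliberately avoid: they work instead with the derivative-of-a-product form, writing $f p = (fq)(p/q)\mathbb{I}_{S_q} + fp\,\mathbb{I}_{S_p\setminus S_q}$ and applying the product rule to the factor $(fq)(p/q)$. Their stated reason is to avoid assuming $f'$ exists; in the present setting this is harmless since $p\in\mathcal G$ is differentiable and positive on the interior of $S_p$, so $f=(fp)/p$ is differentiable there anyway, and your route is legitimate. What their approach buys is that the $\mathbb{I}_{S_q}$ factor is carried automatically through the computation, which matters below.

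There is, however, a genuine slip in your region analysis on $S_p\setminus S_q$. With the convention $1/q\equiv 0$ off $S_q$ you correctly get $\mathcal{T}_qf(x)=0$ and $r(p,q)(x)=p'(x)/p(x)$ there, so the right-hand side becomes $f(x)p'(x)/p(x)+\mathcal{T}_pf(x)$, which does \emph{not} equal $\mathcal{T}_pf(x)$ unless $f(x)p'(x)/p(x)=0$. Your sentence ``the extra $\mathcal{T}_pf$ term is a genuine correction rather than a double-count'' does not resolve this; it is a double-count. In the paper's derivation the middle term emerges with an implicit $\mathbb{I}_{S_q}$ (it carries a prefactor $q(x)/p(x)$), so the pointwise identity they actually prove has $f(x)r(p,q)(x)\mathbb{I}_{S_q}(x)$ as the middle term, and the stated display should be read in that light. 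Fortunately this does not affect \eqref{eq:9}: integrating against $q$ kills anything supported on $S_p\setminus S_q$, and your argument for the expectation identity---using Theorem~\ref{theo1} to annihilate ${\rm E}_q[\mathcal{T}_qf(X)]$ and the support condition to annihilate the correction term---is correct.
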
\vspace{2mm}

\begin{proof}
Splitting $S_p$ into $S_q\cup \{S_p\setminus S_q\}$, we have 
$$f(y)p(y) = f(y)q(y)p(y)/q(y)\mathbb{I}_{S_q}(y)+ f(y)p(y)\mathbb{I}_{S_p\setminus S_q}(y)$$  for any real-valued function $f$.
At any $x$ in the interior of $S_p$ we thus can write
 \begin{align*} & \mathcal{T}_pf(x)\\  
 						      &  = \frac{\left.\frac{d}{dy}(f(y)q(y) p(y)/q(y))\right|_{y=x}}{p(x)} \mathbb{I}_{S_q}(x)+\mathcal{T}_pf(x)\mathbb{I}_{S_p\setminus S_q}(x)  \\
 						      & =
                                                      \frac{\left.\frac{d}{dy}(f(y)q(y))\right|_{y=x}}{p(x)}
                                                      \frac{p(x)}{q(x)}+
                                                      f(x)q(x)\frac{\left.\frac{d}{dy}(
                                                          p(y)/q(y))\right|_{y=x}}{p(x)}+\mathcal{T}_pf(x)\mathbb{I}_{S_p\setminus S_q}(x)  \\ 
						      & =
                                                      \mathcal{T}_qf(x) +
                                                      f(x)\frac{q(x)}{p(x)}\left.\frac{d}{dy}
                                                        ( p(y)/q(y))\right|_{y=x}+\mathcal{T}_pf(x)\mathbb{I}_{S_p\setminus S_q}(x)  .
						      \end{align*}
The first claim readily follows by simplification, the second by taking
expectations under $q$ which cancels  the first term
$\mathcal{T}_qf(x)$ (by definition) as well as  the third term
$\mathcal{T}_pf(x)\mathbb{I}_{S_p\setminus S_q}(x)$ (since the
supports do not coincide).  
\end{proof}
\begin{remark}
  Our proof of Lemma \ref{lemma:facto} may seem circumvoluted; indeed
  a much easier proof is obtainable by writing $\mathcal{T}_p$ under the
  form \eqref{eq:3}. We nevertheless stick to the ``derivative of a
  product'' structure of our operator because this dispenses us with
  superfluous -- and, in some cases, unwanted -- differentiability
  conditions on the test functions.
\end{remark}

From   identity \eqref{eq:9} we deduce the following immediate
result, which requires no proof.
\begin{lemma}
  \label{lem:stein-type-ident}
Let $p$ and $q$ be  probability   density functions in $\mathcal G$ with respective supports $S_q\subseteq S_p$. Let $l$
be a real-valued function  such 
 that   ${\rm 
  E}_p[ l(X)]$ and ${\rm E}_q[ l(X)]$ exist; also suppose that  there
exists $f \in \mathcal{F}(p)\cap \mathcal{F}(q)$ such that 
\begin{equation}\label{eq:steineq22}\mathcal{T}_pf(x) =   (l(x)-
  {\rm E}_p[l(X)])\mathbb{I}_{S_p}(x);\end{equation} 
we denote this function $f_l^p$.
Then 
\begin{equation} \label{eq:fundeq}  {\rm E}_q[l(X)] -{\rm E}_p[l(X)] =  
{\rm E}_q [{f}_l^p(X) r(p , q)(X)].
\end{equation}

\end{lemma}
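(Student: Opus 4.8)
The plan is to derive the identity in Lemma~\ref{lem:stein-type-ident} directly from the factorization identity \eqref{eq:9} already established in Lemma~\ref{lemma:facto}. The key observation is that the hypotheses single out a specific test function $f_l^p \in \mathcal{F}(p)\cap \mathcal{F}(q)$ for which the Stein operator $\mathcal{T}_p$ produces exactly the centered version of $l$ on the support $S_p$, namely $\mathcal{T}_pf_l^p(x) = (l(x) - {\rm E}_p[l(X)])\mathbb{I}_{S_p}(x)$. The entire argument then reduces to computing ${\rm E}_q[\mathcal{T}_pf_l^p(X)]$ in two different ways and equating the results.

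First I would apply \eqref{eq:9} with the choice $f = f_l^p$, which is legitimate precisely because $f_l^p$ is assumed to lie in $\mathcal{F}(p)\cap \mathcal{F}(q)$; this immediately gives
\begin{equation*}
{\rm E}_q\left[\mathcal{T}_pf_l^p(X)\right] = {\rm E}_q\left[f_l^p(X)\, r(p,q)(X)\right],
\end{equation*}
which is the right-hand side of \eqref{eq:fundeq}. Next I would compute the left-hand side directly from the defining equation \eqref{eq:steineq22}: since $\mathcal{T}_pf_l^p(x) = (l(x) - {\rm E}_p[l(X)])\mathbb{I}_{S_p}(x)$ and integration under $q$ against $\mathbb{I}_{S_p}$ amounts to integrating over $S_q \subseteq S_p$, I would obtain
\begin{equation*}
{\rm E}_q\left[\mathcal{T}_pf_l^p(X)\right] = {\rm E}_q\left[l(X)\right] - {\rm E}_p[l(X)],
\end{equation*}
where the constant ${\rm E}_p[l(X)]$ factors out of the expectation and integrates to itself because $q$ is a probability density supported inside $S_p$. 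Equating the two expressions for ${\rm E}_q[\mathcal{T}_pf_l^p(X)]$ yields \eqref{eq:fundeq}.

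The only genuine subtlety — and the reason the statement specifies the existence and integrability conditions so carefully — lies in ensuring that every expectation written above actually exists, so that splitting ${\rm E}_q[(l(X) - {\rm E}_p[l(X)])\mathbb{I}_{S_p}(X)]$ into the difference ${\rm E}_q[l(X)] - {\rm E}_p[l(X)]$ is valid rather than an indeterminate $\infty - \infty$. This is exactly what the hypotheses that ${\rm E}_p[l(X)]$ and ${\rm E}_q[l(X)]$ exist guarantee, together with $S_q\subseteq S_p$ which makes $\mathbb{I}_{S_p}$ act as the identity under the measure $q$. Since both halves of the computation are now pure bookkeeping built on the already-proven Lemma~\ref{lemma:facto}, the result follows with no further work, which is consistent with the authors' remark that it ``requires no proof.'' Indeed I expect the main obstacle here is not any hard estimate but simply confirming the measurability and integrability needed to manipulate the expectations freely.
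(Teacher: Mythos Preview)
Your proposal is correct and follows exactly the approach the paper intends: the authors state that the lemma is an immediate consequence of identity \eqref{eq:9} and ``requires no proof,'' and your argument simply spells out that substitution together with the bookkeeping on the support and integrability hypotheses.
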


The identity \eqref{eq:fundeq} belongs to the family of so-called
``Stein-type identities'' discussed for instance in
\cite{GR05,CP89,APP11}.  In order to be of use, such identities need
to be valid over a large class of test functions $l$. Now it is
immediate to write out the solution $f_l^p$ of the so-called ``Stein
equation'' \eqref{eq:steineq22} explicitly for any given $p$ and $l$;
it is therefore relatively simple to identify under which conditions
on $l$ and $q$ the requirement $f_l^p \in \mathcal{F}(q)$ is verified
(since $f_l^p \in \mathcal{F}(p)$ is anyway true).
\begin{remark}
  For instance, for $p=\phi$ the standard Gaussian, one easily sees
  that $\lim_{x\rightarrow\pm\infty}f_l^\phi(x)=0$, hence, when
  $S_q=S_\phi=\R$, $q$ only has to be (differentiable and) bounded for
  $f_l^\phi$ to belong to $\mathcal{F}(q)$.  However, when $S_q\subset
  \R$, then $q$ has to satisfy, moreover, the stronger condition of
  vanishing at the endpoints of its support $S_q$ since $f_l^\phi$
  needs not equal zero on any finite points in $\R$.
  \end{remark}
%
%
We shall see in the next section that the required conditions for
$f_l^p\in\mathcal{F}(q)$ are satisfied in many important cases by wide
classes of functions $l$. The resulting flexibility makes
\eqref{eq:fundeq} a surprisingly powerful identity, as can be seen
from our next result. 

\begin{theorem}\label{cor:gener-bound-dist}
Let $p$ and $q$ be  probability   density functions in $\mathcal G$
with respective supports $S_q\subseteq S_p$ and such that 
  $\mathcal{F}(p) \cap \mathcal{F}(q) \neq \emptyset$.
Let 
\begin{equation}\label{eq:hdizst}d_{\mathcal H} (p, q) = \sup_{l \in
    \mathcal{H}} \left|{\rm E}_q[l(X)] - {\rm E}_p[l(X)]
  \right|\end{equation} 
for some class of functions $\mathcal{H}$. Suppose that for all $l
\in \mathcal{H}$ the function $f_l^p$, as
defined in \eqref{eq:steineq22}, exists and satisfies $f_l^p \in
\mathcal{F}(p) \cap \mathcal{F}(q)$. Then 
\begin{equation}
  \label{eq:4}
  d_{\mathcal H} (p, q) \le \kappa_{\mathcal{H}}^p \sqrt{\mathcal{J}(p, q)},
\end{equation}
where 
\begin{equation}
  \label{eq:5}
  \kappa_{\mathcal{H}}^p =  \sup_{l \in \mathcal{H}}\sqrt{{\rm E}_q [(f_l^p(X))^2]}
\end{equation}
and 
\begin{equation}
  \label{eq:6}
  \mathcal{J}(p, q) = {{\rm E}_q[(r(p,q)(X))^2]},
\end{equation}
the generalized Fisher information distance between
the densities $p$ and $q$.   
\end{theorem}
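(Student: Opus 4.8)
The plan is to obtain \eqref{eq:4} directly from the Stein-type identity \eqref{eq:fundeq} of Lemma \ref{lem:stein-type-ident}, with the Cauchy--Schwarz inequality as the only analytic ingredient. The hypotheses of the theorem have been arranged precisely so that \eqref{eq:fundeq} is available for every $l \in \mathcal{H}$, so most of the work is already behind us.

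First I would fix an arbitrary $l \in \mathcal{H}$. Because $f_l^p \in \mathcal{F}(p) \cap \mathcal{F}(q)$ by assumption and $S_q \subseteq S_p$, Lemma \ref{lem:stein-type-ident} applies and gives
$${\rm E}_q[l(X)] - {\rm E}_p[l(X)] = {\rm E}_q[f_l^p(X)\, r(p,q)(X)].$$
Taking absolute values and applying Cauchy--Schwarz to the expectation on the right (under the law with density $q$) then yields
$$\left| {\rm E}_q[l(X)] - {\rm E}_p[l(X)] \right| \le \sqrt{{\rm E}_q[(f_l^p(X))^2]}\,\sqrt{{\rm E}_q[(r(p,q)(X))^2]}.$$
By definition \eqref{eq:6} the second factor equals $\sqrt{\mathcal{J}(p,q)}$, and by \eqref{eq:5} the first factor is at most $\kappa_{\mathcal{H}}^p$.

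The final step is to take the supremum over $l \in \mathcal{H}$. On the left this produces $d_{\mathcal{H}}(p,q)$ by \eqref{eq:hdizst}; on the right, $\sqrt{\mathcal{J}(p,q)}$ is independent of $l$ and the supremum of $\sqrt{{\rm E}_q[(f_l^p(X))^2]}$ is $\kappa_{\mathcal{H}}^p$, which delivers \eqref{eq:4}. I do not expect a genuine obstacle here: the argument is a one-line use of Cauchy--Schwarz on an identity already in hand. The only point deserving a word of care is finiteness of the second moments, but if either $\mathcal{J}(p,q)$ or $\kappa_{\mathcal{H}}^p$ is infinite the bound holds trivially, and otherwise Cauchy--Schwarz applies in the standard way, so integrability causes no difficulty.
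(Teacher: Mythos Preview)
Your proposal is correct and matches the paper's intended argument: the theorem is presented as an immediate consequence of the Stein-type identity \eqref{eq:fundeq} together with Cauchy--Schwarz, and the paper does not even write out a separate proof. Your remark about finiteness is a reasonable gloss but not something the authors address explicitly.
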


This theorem implies that all probability metrics that can be written in
the form \eqref{eq:hdizst} are bounded by the generalized Fisher
information distance $\mathcal{J}(p,q)$ (which, of course, can be
infinite for certain choices of $p$ and $q$). 
%
Equation \eqref{eq:4} thus represents the announced extension of \eqref{eq:25} to any couple of densities $(p,q)$ and hence constitutes, in a sense, a counterpart to
Pinsker's inequality \eqref{eq:13} for the Fisher information
distance. We will see in Section \ref{sec:applications} how this
inequality reads for specific choices of $\mathcal{H}$, $p$ and $q$.

\section{Bounding the constants}\label{newsection}


  The constants $\kappa_{\mathcal{H}}^p$ in \eqref{eq:5} depend on
  both densities $p$ and $q$ and therefore, to be fair, should be
  denoted $\kappa_{\mathcal{H}}^{p,q}$.  Our notation is nevertheless
  justified because 
  we always have
\begin{equation}\label{eq:10}
  \kappa_{\mathcal{H}}^{p}\le \sup_{l \in
    \mathcal{H}}\|f_l^p\|_{\infty},
\end{equation}
where the latter bounds (sometimes referred to as \emph{Stein factors} or
\emph{magic factors}) do not depend on $q$ and have been computed for 
many choices of $\mathcal{H}$ and $p$. Consequently,
$\kappa_{\mathcal{H}}^p$ is finite in many known cases
-- including, of course, that of a  Gaussian target.

\begin{example} \label{ex:bounding-constants} Take $p=\phi$, the
  standard Gaussian. Then, from \eqref{eq:10}, we get the bounds (i)
  $\kappa_{\mathcal{H}}^p \le \sqrt{{\pi}/{2}}$ for $\mathcal{H}$ the
  collection of Borel functions in $[0,1]$ (see \cite[Theorem
  3.3.1]{NP11}); (ii) $\kappa_{\mathcal{H}}^p \le \sqrt{2{\pi}}/{4}$
  for $\mathcal{H}$ the class of indicator functions for lower
  half-lines (see \cite[Theorem 3.4.2]{NP11}); and (iii)
  $\kappa_{\mathcal{H}}^p \le \sqrt{\pi/2} \sup_{l\in \mathcal{H}}
  \min \left( \|l-{\rm E}_p \left[ l(X) \right]\|_{\infty}, 2 \|
    l'\|_{\infty} \right)$ for $\mathcal{H}$ the class of absolutely
  continuous functions on $\R$ (see \cite[Lemma 2.3]{ChSh05}). See
  also \cite{BC05,ChGoSh11,NP11, RO12} for more examples.
  \end{example}

Bounds such as \eqref{eq:10}
are sometimes too rough to be satisfactory. We now provide an
alternative bound for $\kappa_{\mathcal{H}}^p$ which, remarkably,
improves upon the best known bounds even in well-trodden cases such as the
Gaussian. 
 We focus on target
densities of the form
\begin{equation} 
  \label{eq:15}
  p(x) = c e^{-d |x|^{\alpha}} \mathbb{I}_S(x),\quad \alpha\geq1,
\end{equation}
with $S$ a scale-invariant subset of $\R$ (that is, either $\R$ or the
open/closed positive/negative real half lines), $d>0$ some constant
and $c$ the appropriate normalizing constant. 
The exponential, the Gaussian  or the
limit distribution for the Ising model on the complete graph from
\cite{CS11}  are all of the form \eqref{eq:15}. Of course, for $S=\R$,
\eqref{eq:15} represents power exponential densities.




\begin{theorem}\label{prop:bound} Take  $p\in\mathcal{G}$ as in \eqref{eq:15} and $q\in\mathcal{G}$ such that $S_q= S$. Consider $h : \R \to \R$ some Borel   function with $p$-mean ${\rm E}_p \left[ h(X) \right] =
  0$. Let    $f_h^p$ be the unique bounded solution
  of the Stein equation 
  \begin{equation}
    \label{eq:18}
    \mathcal{T}_pf(x) = h(x).
  \end{equation}
Then 
  \begin{equation}
    \label{eq:17}
\sqrt{{\rm E}_q \left[ \left( f_h^p(X) \right)^2 \right]}\le \frac{||h||_\infty}{2^{\frac{1}{\alpha}}}.
  \end{equation} 
\end{theorem}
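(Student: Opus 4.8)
The plan is to reduce \eqref{eq:17} to an explicit formula for $f_h^p$, extract the pointwise consequence of the mean-zero hypothesis, and then read off the constant $2^{1/\alpha}$ from the rigid analytic form \eqref{eq:15} of $p$.

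First I would solve \eqref{eq:18}. By \eqref{eq:st_op} the equation $\mathcal{T}_pf=h$ is simply $(fp)'=hp$ on the interior of $S$, so setting $F(x):=f(x)p(x)$ and integrating, the condition ${\rm E}_p[h]=0$ forces the boundary values $F(a^{+})=F(b^{-})=0$ and isolates the unique bounded solution
\[
f_h^p(x)=\frac{1}{p(x)}\int_a^x h(u)p(u)\,du=-\frac{1}{p(x)}\int_x^b h(u)p(u)\,du,
\]
the two representations agreeing exactly because $\int_a^b hp={\rm E}_p[h]=0$; any other solution differs by a multiple of $1/p$, which is unbounded. Writing $P(x)=\int_a^x p$ for the distribution function and bounding $h$ by $\pm\|h\|_\infty$ in whichever representation has the shorter domain of integration yields the sharp pointwise estimate
\[
|f_h^p(x)|\,p(x)=|F(x)|\le \|h\|_\infty\,\min\bigl(P(x),\,1-P(x)\bigr)\le \tfrac12\|h\|_\infty .
\]

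The decisive step is then to convert this control into the $L^2$-bound \eqref{eq:17} with the stated constant, and here I would exploit the scale-invariance of $S$. The substitution $x\mapsto 2^{1/\alpha}x$ gives
\[
\int_S p(x)^2\,dx=c^2\int_S e^{-2d|x|^{\alpha}}\,dx=2^{-1/\alpha}c^2\int_S e^{-d|x|^{\alpha}}\,dx=2^{-1/\alpha}c=2^{-1/\alpha}\,\|p\|_\infty,
\]
so that $\int_S p^2/\|p\|_\infty=2^{-1/\alpha}$ is precisely the constant on the right-hand side of \eqref{eq:17}. The aim is to feed the estimate $|f_h^p|\,p\le\|h\|_\infty\min(P,1-P)$ into ${\rm E}_q[(f_h^p)^2]=\int_S (F/p)^2 q$ and to show that the averaging collapses onto this ratio rather than onto the cruder supremum $\sup_x \min(P,1-P)/p$.

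The hard part will be exactly this linkage. Passing through a pointwise bound on $f_h^p$ alone is too lossy: it produces the classical Stein factor $\sup_x\min(P,1-P)/p$ (for instance $\sqrt{\pi/2}$ when $p=\phi$), which is strictly larger than $2^{-1/\alpha}$. The improvement must therefore keep the $L^2$ averaging alive to the very end, using the unimodality of $\min(P,1-P)/p$ together with the scaling identity above to recognise $\int_S p^2/\|p\|_\infty$ as the operative constant; isolating this ratio, and controlling its interaction with $q$, is where the full strength of the exponential shape $e^{-d|x|^{\alpha}}$ and the scale-invariance of $S$ must be used.
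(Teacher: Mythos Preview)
Your derivation of the explicit solution $f_h^p$ and of the pointwise bound $|f_h^p(x)|\,p(x)\le \|h\|_\infty\min(P(x),1-P(x))$ is correct and matches the paper's starting point. But from that point on your proposal is a plan rather than a proof: you yourself flag that ``the hard part will be exactly this linkage'' and then stop. The observation $\int_S p^2=2^{-1/\alpha}\|p\|_\infty$ is suggestive, but you give no mechanism for turning it into a bound on $\int (f_h^p)^2 q$ that is uniform in $q$; in particular, since $q$ is an arbitrary density on $S$, any argument that merely inserts the pointwise estimate $|f_h^p|\le \|h\|_\infty\min(P,1-P)/p$ into $\int(\cdot)^2 q$ will, as you note, only recover the supremum constant. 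So as written the proposal has a genuine gap at the decisive step.

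The paper's argument is quite different from the direction you sketch. It splits ${\rm E}_q[(f_h^p)^2]$ at $0$ into $I^-+I^+$ and then runs an \emph{iterated} three-step bootstrap on each piece: apply Jensen's inequality to the inner integral, perform the change of variables $u\mapsto 2^{1/\alpha}u$ (this is where the scale-invariance of $S$ and the shape $e^{-d|x|^\alpha}$ enter, producing one factor $2^{-1/\alpha}$ per iteration), and then apply H\"older against $q$ to pull out a power of $\gamma_q={\rm P}_q(X<0)$. Repeating $m$ times yields a bound with prefactor $2^{-N(m)/\alpha}\gamma_q^{N(m)-1}$ where $N(m)=\sum_{k=0}^m 2^{-k}$; letting $m\to\infty$ gives $N(m)\to 2$ and hence $I^-\le \|h\|_\infty^2\,2^{-2/\alpha}\gamma_q$, and symmetrically $I^+\le \|h\|_\infty^2\,2^{-2/\alpha}(1-\gamma_q)$. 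The constant $2^{-1/\alpha}$ thus comes from the geometric series in the iteration, not from the identity $\int p^2/\|p\|_\infty=2^{-1/\alpha}$ you isolate. If you want to salvage your route you would need to explain precisely how that identity controls $\int (F/p)^2 q$ for \emph{every} admissible $q$, which your proposal does not do.
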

 
\begin{proof}

Under the assumption that ${\rm E}_p[h(X)]=0$, the unique bounded
solution of \eqref{eq:18} is  given by   
\begin{equation*}
  f_h^p(x) =       \left\{
    \begin{array}{cc}
 \dfrac{1}{p(x)}\displaystyle{\int_{-\infty}^x h(y) p(y) dy} & \mbox{ if } x \le 0,\\
 \dfrac{-1}{p(x)}\displaystyle{\int_x^\infty h(y) p(y) dy} & \mbox{ if } x \ge 0,
    \end{array}
\right.
\end{equation*}
the function being, of course,  put to 0 if $x$ is outside the
support of $p$. 
Then
\begin{align*}
  {\rm E}_q \left[ (f_h^p(X))^2 \right] & = \int_{-\infty}^{0} q(x) 
\left(\frac{1}{p(x)} \int_{-\infty}^x h(y) p(y) dy \right)^2dx \\
& \quad \quad + \int_{0}^{\infty} q(x) 
\left(\frac{1}{p(x)} \int_x^\infty h(y) p(y) dy \right)^2dx \\
& =: I^-+I^+,
\end{align*}
where    $I^- = 0$ (resp., $I^+=0$)  if $\bar{S}=\R^+$  (resp., 
$\bar{S}=\R^-$). 

We first tackle $I^-$. Setting $p(x)=c e^{-d |x|^{\alpha}} \mathbb{I}_S(x)$ and using Jensen's inequality, we get
\begin{align*} I^- & =  \int_{-\infty}^0 q(x)\left( e^{d|x|^\alpha}\int_{-\infty}^x  h(u)e^{-d|u|^\alpha} du\right)^2 dx \\
&\leq \int_{-\infty}^0 q(x)\left(e^{d|x|^\alpha}\int_{-\infty}^x  |h(u)|e^{-d|u|^\alpha} du\right)^2 dx \\
					& \le   \int_{-\infty}^0 q(x)  \left(e^{2d|x|^\alpha} \int_{-\infty}^x  h^2(u)e^{-2d|u|^\alpha} du\right) dx\\
					&=\frac{1}{2^{1/\alpha}} \int_{-\infty}^0 q(x)  \left(e^{2d|x|^\alpha} \int_{-\infty}^{2^{1/\alpha}x}  h^2(u/2^{1/\alpha})e^{-d|u|^\alpha} du\right) dx,\end{align*}
where the last equality follows from a simple change of variables. Applying  H\"older's inequality we obtain
\begin{align*}  I^- & \leq \frac{\gamma_q^{1/2}}{2^{1/\alpha}}\sqrt{ \int_{-\infty}^0 q(x)  \left(e^{2d|x|^\alpha} \int_{-\infty}^{2^{1/\alpha}x}  h^2(u/2^{1/\alpha})e^{-d|u|^\alpha} du\right)^2 dx}=:I^-_1, \end{align*}
where $\gamma_q={\rm P}_q(X<0) := \int_{-\infty}^0q(x)dx$. Repeating
the  \emph{Jensen's inequality-change of variables-H\"older's
  inequality} scheme once more yields 
$$I^-\le I_1^-\leq I^-_2$$
with 
\begin{align*} 	I^-_2 =	  \frac{\gamma_q^{\frac{1}{2}+\frac{1}{4}}}{2^{\frac{1}{\alpha}(1+\frac{1}{2})}}  \left({ \int_{-\infty}^0 q(x)  \left(e^{4d|x|^\alpha} \int_{-\infty}^{(2^{1/\alpha})^2 x}   h^{4}\left(\frac{u}{(2^{1/\alpha})^2}\right)e^{-d|u|^\alpha} du\right)^2dx }\right)^{\frac{1}{4}}. \end{align*}
Iterating this procedure $m\in \mathbb{N}$ times we deduce 
$$I^-\le I_1^-\le\ldots \le  I_m^-$$
with $I_m^-$ given by 
\begin{align*} & \frac{\gamma_q^{N(m)-1} }{2^{\frac{1}{\alpha}N(m)}}  \left(\int_{-\infty}^0 q(x)  \left(e^{2^md|x|^\alpha} \int_{-\infty}^{(2^{1/\alpha})^{m} x}   h^{2^{m}}\left(\frac{u}{(2^{1/\alpha})^{m}}\right)e^{-d|u|^\alpha} du\right)^2dx\right)^{\frac{1}{2^m}},\end{align*}				
where  $N(m)=1+\frac{1}{2}+\frac{1}{4}+\ldots+\frac{1}{2^m}$. Bounding $h^{2^{m}}\left(\frac{u}{(2^{1/\alpha})^{m}}\right)$ by $(||h||_\infty)^{2^m}$ simplifies the above into
\begin{align*} & \frac{(||h||_\infty)^2 \gamma_q^{N(m)-1} }{2^{\frac{1}{\alpha}N(m)}}  \left(\int_{-\infty}^0 q(x)  \left(e^{2^md|x|^\alpha} \int_{-\infty}^{(2^{1/\alpha})^{m} x}   e^{-d|u|^\alpha} du\right)^2dx\right)^{\frac{1}{2^m}}.\end{align*}
Since the mapping $y\mapsto
\eta(y):=e^{d|y|^\alpha}\int_{-\infty}^ye^{-d|u|^\alpha}du$ attains
its maximal value at 0 for $\alpha\geq1$ (indeed, 
\begin{align*}\eta'(y)& =1-e^{d|y|^\alpha}d\,\alpha|y|^{\alpha-1}\int_{-\infty}^ye^{-d|u|^\alpha}du
  \\
&
\geq1-e^{d|y|^\alpha}\int_{-\infty}^yd\alpha|u|^{\alpha-1}e^{-d|u|^\alpha}du=0,\end{align*}
hence $\eta$ is monotone increasing), the interior of the parenthesis
becomes 
\begin{align*}
&\int_{-\infty}^0 q(x)  \left(e^{2^md|x|^\alpha}
  \int_{-\infty}^{(2^{1/\alpha})^{m} x}   e^{-d|u|^\alpha}
  du\right)^2dx \leq \int_{-\infty}^0 q(x) \frac{1}{c^2}dx  = \frac{\gamma_q}{c^2}.
\end{align*}
Note that here we have used, for any support $S$, $\int_{-\infty}^0ce^{-d|u|^\alpha}du\leq 1$. Elevated to the power $1/(2m)$, this factor tends to $1$ as
$m\rightarrow\infty$. Since we also have $\lim_{m\rightarrow\infty}N(m)=2$ we finally obtain
$$I^-\leq \lim_{m\rightarrow\infty}I^-_m\leq \frac{(||h||_\infty)^2 }{2^{\frac{2}{\alpha}}}{\rm P}_q(X<0).$$ 
Similar manipulations allow to bound $I^+$ by
$\frac{(||h||_\infty)^2}{2^{\frac{2}{\alpha}}}{\rm
  P}_q(X>0)$. Combining both bounds then allows us to conclude that

$$\sqrt{{\rm E}_q \left[ (f_h^p(X))^2 \right]}\leq\frac{||h||_\infty}{2^{\frac{1}{\alpha}}},$$ 
hence the claim holds. \end{proof}

This result of course holds true without worrying about
$f_h^p\in\mathcal{F}(q)$. However, in order to make use of these
bounds in the present context, the latter condition has to be taken
care of. For densities of the form \eqref{eq:15}, one easily sees that
$f_h^p\in\mathcal{F}(q)$ for all (differentiable and) bounded
densities $q$ for $\alpha>1$, with the additional assumption, for
$\alpha=1$, that $\lim_{x\rightarrow\pm\infty}q(x)=0$. 

\begin{example} Take $p=\phi$, the standard Gaussian. Then, from \eqref{eq:17},
\begin{equation}\label{eq:27}
  \kappa_{\mathcal{H}}^p \le \frac{1}{\sqrt2}\sup_{l\in \mathcal{H}}    \|l-{\rm E}_\phi \left[ l(X)
    \right]\|_{\infty}.
\end{equation}
Comparing with the bounds from Example
\ref{ex:bounding-constants} we see that \eqref{eq:27} significantly
improves on the constants in cases (i) and (iii); it is slightly worse
in case (ii).
  \end{example}

\section{Applications}
\label{sec:applications}

A wide variety of
probability distances can be written under the form 
\eqref{eq:hdizst}. For instance the
total variation distance  is given by
\begin{align*}
  d_{\mathrm{TV}}(p, q) = \sup_{A \subset \R} \left| \int_{A} (p(x) -
    q(x) )dx   \right| = \frac{1}{2} \sup_{h \in \mathcal{H}_{B[-1,1]}} \left| {\rm E}_p
    \left[ h(X) \right] - {\rm E}_q \left[ h(X) \right] \right|
\end{align*}
with $\mathcal{H}_{B[-1,1]}$ the class of
Borel functions in $[-1,1]$, 
the  Wasserstein distance is given by
$$
  d_{\mathrm{W}}(p, q) = \sup_{h \in \mathcal{H}_{{\rm Lip} 1}} \left| {\rm E}_p
    \left[ h(X) \right] - {\rm E}_q \left[ h(X) \right] \right|
$$
with $\mathcal{H}_{{\rm Lip} 1}$ the class of Lipschitz-1 functions on $\R$ and the
 Kolmogorov distance is given by
\begin{align*}
  d_{\mathrm{Kol}}(p, q)  = \sup_{z \in \R} \left| \int_{-\infty}^z (p(x) -
    q(x) )dx   \right| = \sup_{h \in \mathcal{H}_{HL}} \left| {\rm E}_p
    \left[ h(X) \right] - {\rm E}_q \left[ h(X) \right] \right|
\end{align*}
with $\mathcal{H}_{HL}$ the class of indicators of lower half lines. We
refer to \cite{GS02} for more examples and for an interesting overview
of the relationships between these probability metrics.

Specifying the class $\mathcal{H}$ in
Theorem~\ref{cor:gener-bound-dist} allows to bound all such
probability metrics in terms of the generalized Fisher information
distance \eqref{eq:6}. It remains to compute the constant
\eqref{eq:5}, which can be done for all $p$ of the form \eqref{eq:15}
 through \eqref{eq:17}.  The following result illustrates these computations in
several important  cases.



\begin{corollary}\label{distances}
Take  $p\in\mathcal{G}$ as in \eqref{eq:15} and $q\in\mathcal{G}$ such
that $S_q= S$. For $\alpha>1$, suppose that $q$ is (differentiable and) bounded over $S$; for $\alpha=1$, assume moreover that $q$ vanishes at the infinite endpoint(s) of $S$. Then we have the following inequalities:
\begin{enumerate}
\item
$$d_{\rm TV}(p,q)\leq
{2^{-\frac{1}{\alpha}}}\sqrt{\mathcal{J}(p,q)}$$
\item 
$$d_{\rm Kol}(p,q)\leq
{2^{-\frac{1}{\alpha}}}\sqrt{\mathcal{J}(p,q)}$$
\item $$d_{\rm W}(p,q)\leq
  \frac{\sup_{l\in\mathcal{H}_{{\rm Lip} 1}}||l-{\rm
      E}_p[l(X)]||_\infty}{2^{\frac{1}{\alpha}}}\sqrt{\mathcal{J}(p,q)}$$
\item$$d_{L^1}(p,q)=\int_S | p(x) - q(x) | dx \leq
2^{1-\frac{1}{\alpha}}\sqrt{\mathcal{J}(p,q)}.$$
\end{enumerate}

\noindent If, for all $y\in S$, $q$ is such that the
  function $f_l^p(x)=e^{d|x|^\alpha}(\mathbb{I}_{[y,b)}(x)-P(x))$,
  where $P$ denotes the cumulative distribution function associated
  with $p$, belongs to $\mathcal{F}(q)$, then
$$d_{\rm sup}(p,q)=\sup_{x\in \R} \left|p(x) - q(x)  \right|\leq \sqrt{\mathcal{J}(p,q)}.$$


\end{corollary}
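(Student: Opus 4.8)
The plan is to realise $d_{\rm sup}$ as a supremum of the type \eqref{eq:hdizst} over the family of \emph{point-mass} functionals and then to run the machinery of Theorem~\ref{cor:gener-bound-dist}. Fix $y\in S$ and take formally $l=\delta_y$, so that ${\rm E}_q[l]-{\rm E}_p[l]=q(y)-p(y)$. Solving $\mathcal{T}_pf=\delta_y-p(y)$ produces the Stein solution
$$
\tilde f_y(x):=\frac{p(y)}{p(x)}\left(\mathbb{I}_{[y,b)}(x)-P(x)\right),
$$
which, using $1/p(x)=c^{-1}e^{d|x|^\alpha}$ and $p(y)=ce^{-d|y|^\alpha}$, equals $e^{-d|y|^\alpha}$ times the function $f_l^p$ displayed in the statement; in particular the hypothesis $f_l^p\in\mathcal{F}(q)$ is equivalent to $\tilde f_y\in\mathcal{F}(q)$. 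The first task is the identity
$$
p(y)-q(y)=-{\rm E}_q\left[\tilde f_y(X)\,r(p,q)(X)\right].
$$
Since $\tilde f_y(x)p(x)=p(y)\left(\mathbb{I}_{[y,b)}(x)-P(x)\right)$ has an atom of mass $p(y)$ at $y$ and continuous part $-p(y)p(x)\,dx$, a Lebesgue--Stieltjes evaluation of ${\rm E}_q[\mathcal{T}_p\tilde f_y(X)]=\int(q/p)\,d(\tilde f_y p)$ returns $p(y)\,q(y)/p(y)-p(y)\,{\rm E}_q[1]=q(y)-p(y)$, the endpoint contributions vanishing by the boundary condition encoded in $\tilde f_y\in\mathcal{F}(q)$; feeding $\tilde f_y$ into \eqref{eq:9} then gives the displayed identity.

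With the identity in hand, Cauchy--Schwarz yields
$$
|p(y)-q(y)|\le\sqrt{{\rm E}_q\left[\tilde f_y(X)^2\right]}\,\sqrt{{\rm E}_q\left[r(p,q)(X)^2\right]}=\sqrt{{\rm E}_q\left[\tilde f_y(X)^2\right]}\,\sqrt{\mathcal{J}(p,q)},
$$
so that, after taking the supremum over $y\in S$, it suffices to prove $\sup_{y\in S}\sqrt{{\rm E}_q[\tilde f_y(X)^2]}\le 1$. This quantity is precisely the constant \eqref{eq:5} attached to the present class, and by \eqref{eq:10} it is enough to establish the magic-factor bound $\|\tilde f_y\|_\infty\le 1$ for every $y$, whence ${\rm E}_q[\tilde f_y(X)^2]\le\|\tilde f_y\|_\infty^2\,{\rm E}_q[1]\le 1$. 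Note that, in contrast with Theorem~\ref{prop:bound}, the crude $L^\infty$ estimate suffices here because the relevant solutions are essentially bounded by $1$, so no iteration is needed.

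The key step is therefore the uniform bound $\|\tilde f_y\|_\infty\le 1$, and here I would exploit that $p(x)=ce^{-d|x|^\alpha}$ is \emph{log-concave} for $\alpha\ge 1$. On $\{x>y\}$ one has $\tilde f_y(x)=p(y)\,(1-P(x))/p(x)$, a multiple of the generalised Mills ratio $(1-P)/p$, which is nonincreasing by log-concavity, so $|\tilde f_y(x)|\le\tilde f_y(y^{+})=1-P(y)$ there; on $\{x<y\}$ one has $\tilde f_y(x)=-p(y)\,P(x)/p(x)$, a multiple of the reverse ratio $P/p$, which is nondecreasing, whence $|\tilde f_y(x)|\le P(y)$. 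Consequently $\|\tilde f_y\|_\infty=\max(P(y),1-P(y))\le 1$, uniformly in $y$ and for every admissible support $S$, which closes the argument.

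The main obstacle is not the estimation but the \emph{legitimacy} of the Stein identity: the natural functional is a Dirac mass, equivalently $\tilde f_y p$ carries a jump at the interior point $y$, so $\tilde f_y$ is not strictly a member of $\mathcal{F}(p)$ and \eqref{eq:9} does not apply verbatim. The honest way around this is to differentiate in the parameter the genuine Kolmogorov identity $Q(y)-P(y)={\rm E}_q[f_{l_y}^p(X)\,r(p,q)(X)]$ built from the continuous functionals $l_y=\mathbb{I}_{(a,y]}\in\mathcal{H}_{HL}$: one checks that $\partial_y f_{l_y}^p=\tilde f_y$ and justifies the interchange of $\partial_y$ and ${\rm E}_q$ by dominated convergence for the difference quotients. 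Alternatively, one may mollify $\delta_y$ into a bounded approximate identity, apply Theorem~\ref{cor:gener-bound-dist} to each mollification, and pass to the limit using the continuity of $p-q$; in either route the hypothesis ``$f_l^p\in\mathcal{F}(q)$ for all $y$'' is exactly what guarantees that the requisite endpoint behaviour of $\tilde f_y\,q$ survives.
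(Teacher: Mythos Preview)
Your proof is correct and follows essentially the same route as the paper: you write the Stein solution for the Dirac functional as $\tilde f_y(x)=p(y)\bigl(\mathbb{I}_{[y,b)}(x)-P(x)\bigr)/p(x)$, apply Cauchy--Schwarz, and bound the resulting $L^2(q)$-norm of $\tilde f_y$ by $1$ using the monotonicity of $P/p$ and $(1-P)/p$ (equivalently, the log-concavity of $p$). The paper performs the very same monotonicity step inside the integral $\int q(x)\tilde f_y(x)^2\,dx$ to reach $\max\bigl(P(y)^2,(1-P(y))^2\bigr)\le 1$, whereas you short-circuit this by first passing to $\|\tilde f_y\|_\infty=\max\bigl(P(y),1-P(y)\bigr)$ via \eqref{eq:10}; this is a mild but genuine simplification. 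You also address explicitly the legitimacy of using $l=\delta_y$ (via mollification or differentiation of the Kolmogorov identity), a point the paper's appendix treats only formally.
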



\begin{proof}
The first three points follow immediately from the definition of the
distances and Theorems \ref{cor:gener-bound-dist} and
\ref{prop:bound}. To show the fourth, note that 
  \begin{align*}
  \int_S|p(x)-q(x)|dx & =  {\rm E}_p[l(X)]-{\rm E}_q[l(X)]
  \end{align*}
for $
  l(u)=\mathbb{I}_{[p(u)\geq q(u)]}-\mathbb{I}_{[q(u)\geq
  p(u)]} = 2 \mathbb{I}_{[p(u)\geq q(u)]}-1.$ For the last case note
that 
$$d_{\rm sup}(p,q):=\sup_{y\in S}|p(y)-q(y)|=\sup_{y\in S}|{\rm E}_p[l_y(X)-{\rm E}_q[l_y(X)]|$$ 
for $l_y(x)=\delta_{\{x=y\}}$ the Dirac delta function in $y\in
S$. The computation of the constant 
$\kappa_{\mathcal{H}}^p$ in this case requires a different approach
from our Theorem \ref{prop:bound}. We defer this to the Appendix. 
\end{proof}

We conclude this section, and the paper, with explicit computations in
the Gaussian case $p=\phi$, hence for the classical Fisher information distance.   From here on we adopt the more standard
notations and write $\mathcal{J}(X)$  instead of $\mathcal{J}(\phi, q)$, for $X$ a
random variable with density $q$ (which has support $\R$). Immediate
applications of the above yield
  \begin{equation*}
        \int_S \left| \phi(x) - q(x) \right|dx \le \sqrt2
        \sqrt{\mathcal{J}(X)},
  \end{equation*}
which is the second inequality in \cite[Lemma 1.6]{MR2128239} (obtained by
entirely different means). 
Similarly we readily deduce 
  \begin{equation*}
\sup_{x\in \R} \left| \phi(x) - q(x) \right| \le    \sqrt{\mathcal{J}(X)};
  \end{equation*}
this is  a significant improvement on the constant in   \cite{MR2128239,
  Sh75}.  

Next further suppose  that $X$ has density $q$ with  mean $\mu$ and variance
 $\sigma^2$. Take $Z \sim p$ with 
 $p=\phi_{\mu_0, \sigma_0^2}$, the Gaussian with mean $\mu_0$ and
variance $\sigma_0^2$. Then
\begin{align*}
  \mathcal{J}(X) & = {\rm E}_q \left[ \left(
      \frac{q'(X)}{q(X)}+ \frac{X-\mu_0}{\sigma_0^2}
    \right)^2 \right]  =  I(X) + \frac{(\mu-\mu_0)^2}{\sigma_0^4} + \frac{1}{\sigma_0^2}
\left( \frac{\sigma^2}{\sigma_0^2}-2 \right),  
\end{align*}
where $I(X) = {\rm E}_q \left[ (q'(X)/q(X))^2 \right]$ is the Fisher
information of the random variable  $X$. 
General bounds are thus also obtainable from \eqref{eq:4} in terms
of 
\begin{equation*}
\Psi :=\Psi(\mu, \mu_0, \sigma, \sigma_0) =    \frac{(\mu-\mu_0)^2}{\sigma_0^4} + \frac{1}{\sigma_0^2}
\left( \frac{\sigma^2}{\sigma_0^2}-1 \right).
\end{equation*}
and the quantity
\begin{equation*}
  \Gamma(X) =  I(X) - \frac{1}{\sigma_0^2},
\end{equation*}
referred to as the \emph{Cram\'er-Rao functional} for $q$ in 
\cite{MW90}. 
In particular,  we deduce from Theorem \ref{prop:bound} and the definition of the total
variation distance that 
$$
  d_{\rm TV}(\phi_{\mu_0, \sigma_0^2}, q) \le \frac{1}{\sqrt{2}} \sqrt{
      \Gamma(X) + \Psi}.
$$
This is an improvement (in the constant) on \cite[Lemma
 3.1]{MW90}, and is   also related to \cite[Corollary 1.1]{CPU94}. 
Similarly, taking $\mathcal{H}$
the collection
of indicators for lower half lines we can use \eqref{eq:10} and the bounds from
\cite[Lemma 2.2]{ChSh05} 
to deduce 
$$
d_{\rm Kol}(\phi_{\mu_0, \sigma_0^2},q) \le \frac{\sqrt{2\pi}}{4}\sigma_0 \sqrt{
      \Gamma(X) + \Psi}.  
$$
Further specifying  $q = \phi_{\mu_1, \sigma_1^2}$ we see that 
  \begin{equation*}
\sigma_0 \sqrt{
      \Gamma(X) + \Psi } \le  \frac{\left|
      \sigma_1^2- \sigma_0^2 \right|}{\sigma_0\sigma_1} + \frac{\left|
      \mu_1- \mu_0 \right|}{\sigma_0}, 
  \end{equation*}
to be compared with \cite[Proposition 3.6.1]{NP11}.  Lastly take $Z\sim \phi$ the standard Gaussian and $X\stackrel{d}{=}F(Z)$ for $F$ some
  monotone increasing function on $\R$ such that  $f = F'$ is defined everywhere. Then
  straightforward computations yield 
  \begin{equation*}
    I(X) = {\rm E} \left[ \left(\frac{ \psi_f(Z) + Z}{f(Z)} \right)^2
    \right], 
  \end{equation*}
with $\psi_f = (\log f)'$. In particular, if  $F$ is a random function
of the form $F(x) = Y x$ for $Y>0$
some random variable independent of $Z$, then simple conditioning
shows that the above becomes  
\begin{equation*}
  I(X) = {\rm E} \left[ \frac{Z^2}{Y^2} \right] = {\rm E} \left[ \frac{1}{Y^2} \right],
\end{equation*}
 so that 
\begin{equation*}
  d_{\rm TV}(\phi,q_X) \le \frac{1}{\sqrt2}  \sqrt{ {\rm E} \left[
      \frac{1}{Y^2} \right] - 1 +{\rm E}(Y^2 -1)}
\end{equation*}
where $q_X$ refers to the density of $X\stackrel{d}{=}YZ$. This last inequality is to be compared with \cite[Lemma 4.1]{CPU94} and also \cite{Sh87}.

\appendix

\section{Bounds for the supremum norm}
\label{sec:technical-proof}

  First note that, for $l_y(x)=\delta_{\{x=y\}}$, the solution
  $f_{l_y}^p(x)$ of the Stein equation \eqref{eq:steineq22} is of the
  form 
$$\frac{1}{p(x)}\int_{a}^x(\delta_{\{z=y\}}-p(y))p(z)dz=\frac{p(y)(\mathbb{I}_{[y,b)}(x)-P(x))}{p(x)}.$$
For all densities $q$ such that $f_{l_y}^p(x) \in \mathcal{F}(q)$, Theorem~\ref{cor:gener-bound-dist} applies and
yields
$${\rm sup}_{y\in S}|p(y)-q(y)|\leq {\rm sup}_{y\in S}p(y)\sqrt{{\rm
    E}_q[(\mathbb{I}_{[y,b)}(X)-P(X))^2/(p(X))^2]}\sqrt{\mathcal{J}(p,q)},$$
where $b$ is either $0$ or $+\infty$. We now prove that 
\begin{equation*}
{\rm sup}_{y\in S}p(y)\sqrt{{\rm
    E}_q[(\mathbb{I}_{[y,b)}(X)-P(X))^2/(p(X))^2]}\leq 1  
\end{equation*}
for $p(x)=c\,e^{-d|x|^\alpha}$ and any density $q$ satisfying the assumptions
of the claim. To this end note that straightforward manipulations lead to 
\begin{align*}
  {\rm E}_q[\left(\mathbb{I}_{[y, b)}(X) - P(X)\right)^2/(p(X))^2]&\\
  & \hspace{-4cm} = \frac{1}{c^2}\int_{a}^b q(x)e^{2d|x|^\alpha}(\mathbb{I}_{[y, b)}(x)-P(x))^2dx\\
  & \hspace{-4cm}= \frac{1}{c^2}\int_{a}^y q(x)e^{2d|x|^\alpha}(P(x))^2dx +\frac{1}{c^2}  \int_{y}^b q(x)e^{2d|x|^\alpha}(1-P(x))^2dx\\
  & \hspace{-4cm}\leq \frac{1}{c^2} e^{2d|y|^\alpha}(P(y))^2\int_{a}^y q(x)dx +\frac{1}{c^2} e^{2d|y|^\alpha}(1-P(y))^2\int_y^b q(x)dx\\
  & \hspace{-4cm}= \frac{1}{c^2} e^{2d|y|^\alpha}(P(y))^2 +
  \frac{1}{c^2}e^{2d|y|^\alpha}(1-2P(y)){\rm P}_q(X\geq y),
\end{align*}
where the inequality is due to the fact that $e^{2d|x|^\alpha}P(x)$
(resp., $e^{2d|x|^\alpha}(1-P(x)))$ is monotone increasing (resp.,
decreasing) on $(a,y)$ (resp., $(y,b)$); see the proof of
Theorem~\ref{prop:bound}. This again directly leads to
\begin{align*}
& {\rm E}_q[\left(\mathbb{I}_{[y, b)}(X) - P(X)\right)^2/(p(X))^2]\\
&\leq \sup_{y\in (a,b)}\left(ce^{-d|y|^\alpha}\sqrt{\frac{1}{c^2}e^{2d|y|^\alpha}((P(y))^2 +  (1-2P(y)){\rm P}_q(X\geq y)}\right)\\
&=\sup_{y\in(a,b)}\left(\sqrt{(P(y))^2 +  (1-2P(y)){\rm P}_q(X\geq y)}\right).
\end{align*}
This last expression is equal to 1.

\bibliographystyle{amsplain}
\bibliography{../../bibliography/biblio_ys_stein}

\end{document}